\newtheorem{theorem}{Theorem}
\newtheorem{lemma}{Lemma}
\begin{document}
\title{Exponential prime sequences}
\author{Bernard Montaron, Fraimwork SAS, France}
\date{November 1, 2020}
\maketitle
\begin{abstract}

Infinite exponential sequences of distinct prime numbers of the form $\lfloor a c^{n^d}+b\rfloor$, $n\geq 0$, are proved to exist for well chosen real constants $a>0$, $b$, $c>1$, $d>1$, assuming Cramer's conjecture on prime gaps. There is an infinity of such prime sequences.  Sequences having the least possible growth rate are of particular interest. This work's focus is on prime sequences with $a=1$, $b \in \{0,1\}$, that have the smallest possible constant $c$ given $d>1$, and sequences with the smallest possible $d$, given $c=2$. In particular, we prove the existence of the four infinite exponential prime sequences $u_0(n)=\lfloor c_0^{n\sqrt{n}}\rfloor$, $n\geq 1$, with $c_0=2.0073340803...$, $u_1(n)=1+\lfloor c_1^{n\sqrt{n}}\rfloor$, $n\geq 0$, with $c_1=2.2679962677...$, $v_0(n)=\lfloor 2^{n^{d_0}}\rfloor$, $n\geq 1$, with $d_0=1.5039285240...$, and $v_1(n)=1+\lfloor 2^{n^{d_1}}\rfloor$, $n\geq 0$, with $d_1=1.7355149500...$.

\end{abstract}
\section{Introduction}

In the famous sequence $u(n)=\lfloor c^{3^n}\rfloor$ for $n \geq 1$ introduced by Mills in 1947 (1) the smallest constant $c$ such that all $u(n)$ are prime numbers is equal to 1.306377883863... In this sequence the next term has three times more digits than the previous one, and that growth rate is the reason why only the first 11 terms are proven primes, and only the first 14 terms are probable primes.\\
The motivation of this paper is to generate infinite prime sequences with a much lower growth rate, low enough to generate hundreds of primes.\\
Let $f(x)$ be a non-negative growing function of $x$ such that $\forall x \geq x_0,\; f(x) \geq x$ for some $x_0>1$.  The infinite sequence $$u(n)=\lfloor a c^{f(n)}+b\rfloor$$ where $a>0$, $b$, $c>1$ are well-chosen real numbers is an exponential prime sequence if all terms for $n \in \mathbb{N}$ are distinct prime numbers. By convention if $b>0$ the sequence starts at $n=0$, otherwise it may start at $n=1$.\\
The condition $f(x) \geq x$ justifies the name exponential sequence. Functions such as, for example, $f(x)=\ln{(x^2 +x+41)}$ would not generate exponential sequences and will not be considered here.  Mills's sequence, which has $f(x)=3^x$, $a=1$, $b=0$, $c=1.306377883863...$, was the first exponential prime sequence introduced in mathematics.\\
Note that sequences of the form $\lceil a c^{f(n)} + b\rceil$ can also be considered as exponential prime sequences, but this paper will focus on forms with the floor function.\\
\\
Some proofs in this paper will be based on Cramer's conjecture (3) on prime gaps. It states that the gap $g(p)$ between a prime $p$ and the next prime is $O(\ln{^2 p})$ which implies $g(p) < H \ln{^2 p}$ for some constant $H$. Daniel Shanks subsequently conjectured (4) that $H=1$ for any prime greater than 7, but Andrew Granville (5) argued based on prime probability heuristics that $H > 2/\exp{\gamma}=1.1229...$ (6) where $\gamma$ is the Euler-Mascheroni constant. As of 2018, the greatest known ratio $g(p)/\ln{^2 p}$ for $p>7$ is 0.9206385886 with gap 1132 and $p=$1693182318746371.\\
The definition of the function $g(x)$ can be extended to any positive real $x$ as $g(x)=\text{pr}(x)-x$ where the function $\text{pr}(x)$ is the smallest prime number greater than $x$. Cramer's conjecture implies that for any $x>1$ real $g(x)<H\ln{^2 x}$.

\section{Exponential prime sequences with $f(x)=x^d$, $d>1$}

Let's consider the sequence $u(n)=\lfloor c^{n^d}\rfloor$ for $n \geq 1$ with an exponent $d$ greater than 1. Similar to Mills sequence, we're looking for the smallest constant $c$ such that all $u(n)$ are primes. Since $u(1)=\lfloor c\rfloor$ the constant $c$ must be greater or equal to 2. The initial value of the constant is set to $c_1=2$ and the sequence starts with the smallest prime $u(1)=c_1^{1^d}=2$.  Suppose that for some sufficiently large $n>1$ we were able to build a sequence of $n$ prime numbers $u(k)=\lfloor c_n^{k^d}\rfloor$ for all $0 \leq k \leq n$, with the constant equal to $c_n= u(n)^{1/n^d}$, i.e. $u(n)=c_n^{n^d}$ is an integer.\\  
Let's calculate the smallest possible following term.  If the number $U_n=\lfloor c_n^{(n+1)^d}\rfloor$ is prime, then the next term in the sequence is $u(n+1)=U_n$ and the constant $c_n$ does not need to be adjusted, i.e. we set $c_{n+1}=c_n$. On the contrary, if $U_n$ is not prime the constant $c_n$ must be increased to $c_{n+1}$ such that $u(n+1)=c_{n+1}^{(n+1)^d}$ is the smallest prime greater than $U_n$. At that stage $c_{n+1}=u(n+1)^{1/(n+1)^d}$ and $u(n+1)=c_{n+1}^{(n+1)^d}$ is an integer. The gap between $c_n^{(n+1)^d}$ and the next prime is $$g(U_n)=c_{n+1}^{(n+1)^d}-c_n^{(n+1)^d}$$
In order to obtain a prime sequence with this additional term a sufficient condition is that the new value $c_{n+1}$ of the constant doesn't change $u(n)$ and all previous terms. Note that this is not a necessary condition because, for example, if the new value of $u(n)$ is increased to another prime value, that's a valid case.

\begin{lemma}

Assuming that the first $n$ terms, $n>1$, of the sequence $u(k)=\lfloor c_n^{k^d} \rfloor$ are prime numbers and the constant is $c_n=u(n)^{1/n^d}$, let $\delta_k = c_n^{k^d}- \lfloor c_n^{k^d} \rfloor$ be the fractional part of the $n$ numbers $c_n^{k^d}$ for $k \leq n$. By assumption $\delta_n=0$. A sufficient condition for the next term $u(n+1)$ to be prime using a new constant $c_{n+1}\geq c_n$ that doesn't change the value of $u(n)$ is $$\delta_n^{(1)}=\frac{g(U_n)}{c_n^{(n+1)^d-n^d}}\big(\frac{n}{n+1}\big)^d+O\Big(\frac{g(U_n)^2}{(U_n+\beta_n)^{2-(n/(n+1))^d}}\Big)<1$$ where $\delta_n^{(1)}=c_{n+1}^{n^d}-\lfloor c_{n+1}^{n^d}\rfloor$ is the fractional part associated to $u(n)$ when using the new constant value $c_{n+1}$, and where $\beta_n$ is the fractional part of $c_n^{(n+1)^d}$.

\end{lemma}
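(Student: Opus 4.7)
The plan is to expand $c_{n+1}^{n^d}$ to second order around $c_n^{n^d}=u(n)$, match the result to the displayed formula, and then observe that the stated bound is exactly what forces $\lfloor c_{n+1}^{n^d}\rfloor$ to remain equal to $u(n)$. The starting identity, already derived in the paragraph preceding the lemma, is
\begin{equation*}
c_{n+1}^{(n+1)^d}=c_n^{(n+1)^d}+g(U_n)=(U_n+\beta_n)+g(U_n).
\end{equation*}
Raising both sides to the power $\alpha:=n^d/(n+1)^d=(n/(n+1))^d$ isolates $c_{n+1}^{n^d}$ on the left.

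The second step is a Taylor (binomial) expansion. Factoring $U_n+\beta_n=c_n^{(n+1)^d}$ out of the right-hand side and applying $(1+y)^{\alpha}=1+\alpha y+O(y^2)$ with $y=g(U_n)/(U_n+\beta_n)$ gives
\begin{equation*}
c_{n+1}^{n^d}=(U_n+\beta_n)^{\alpha}\left(1+\alpha\,\frac{g(U_n)}{U_n+\beta_n}+O\!\left(\frac{g(U_n)^2}{(U_n+\beta_n)^{2}}\right)\right).
\end{equation*}
Two elementary simplifications collapse this to the form in the lemma. First, $(U_n+\beta_n)^{\alpha}=c_n^{\alpha(n+1)^d}=c_n^{n^d}=u(n)$, so the prefactor is an integer, namely $u(n)$. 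Second, $u(n)/(U_n+\beta_n)=c_n^{-((n+1)^d-n^d)}$, which turns the linear term into $(n/(n+1))^d\,g(U_n)/c_n^{(n+1)^d-n^d}$; rewriting the quadratic remainder with $u(n)=(U_n+\beta_n)^{\alpha}$ yields $O\bigl(g(U_n)^2/(U_n+\beta_n)^{2-\alpha}\bigr)$, which is precisely the $O$-term stated.

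To close the argument I would note that $c_{n+1}\ge c_n$ forces $c_{n+1}^{n^d}\ge c_n^{n^d}=u(n)$, and $u(n)$ is an integer. Hence $\lfloor c_{n+1}^{n^d}\rfloor=u(n)$ is equivalent to $c_{n+1}^{n^d}-u(n)<1$, and in that event the fractional part $\delta_n^{(1)}$ of $c_{n+1}^{n^d}$ coincides with the quantity $c_{n+1}^{n^d}-u(n)$ just computed. Therefore the inequality $\delta_n^{(1)}<1$ in the displayed form is exactly what prevents the readjustment of the constant from shifting the $n$-th term. Since $u(n+1)=c_{n+1}^{(n+1)^d}$ is prime by the very choice of $c_{n+1}$, this condition is sufficient for both requirements of the lemma.

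The only genuine obstacle is controlling the error. The binomial expansion is trustworthy only if $y<1$, which I would justify by invoking Cramer's conjecture: $g(U_n)=O(\ln^2 U_n)$ while $U_n+\beta_n$ grows super-exponentially in $n$, so $y$ is minuscule for any $n$ of interest and the series converges extremely rapidly. The remainder after the linear term is then genuinely of order $u(n)\,y^2$, which after the rewriting above matches the stated bound. Everything else is bookkeeping of exponents of $c_n$, with the minor observation that $\alpha\in(0,1)$ keeps all powers of $U_n+\beta_n$ in the denominators positive.
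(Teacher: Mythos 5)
Your proposal is correct and follows essentially the same route as the paper: the same binomial expansion of $c_{n+1}^{n^d}=c_n^{n^d}\bigl(1+g(U_n)/c_n^{(n+1)^d}\bigr)^{(n/(n+1))^d}$, the same identification of $\delta_n^{(1)}$ with $c_{n+1}^{n^d}-u(n)$, and the same rewriting of the linear and quadratic terms via $U_n+\beta_n=c_n^{(n+1)^d}$. Your version is if anything slightly cleaner, since you keep the prefactor as $(U_n+\beta_n)^{(n/(n+1))^d}=u(n)$ exactly where the paper writes $U_n^{(n/(n+1))^d}$.
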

\begin{proof}

The value of $u(n)$ is unchanged iff $0 \leq \delta_n^{(1)} =c_{n+1}^{n^d}-c_n^{n^d}<1$.  By construction $c_n^{n^d}$ is an integer hence $\delta_n^{(1)}$ is equal to the fractional part $c_{n+1}^{n^d}-\lfloor c_{n+1}^{n^d}\rfloor$. Let's calculate $\delta_n^{(1)}$.
$$c_{n+1}^{n^d}=c_n^{n^d}\Big(1+\frac{g(U_n)}{c_n^{(n+1)^d}}\Big)^{(n/(n+1))^d}=c_n^{n^d}\Big(1+ \frac{g(U_n)}{U_n+\beta_n}\big(\frac{n}{n+1}\big)^d+R\Big)$$
with $0<\beta_n =c_n^{(n+1)^d}-U_n<1$ and where the remainder is $R=O(g(U_n)^2/(U_n+\beta_n)^2)$. With this, $$\delta_n^{(1)}=U_n^{(n/(n+1))^d}\Big(\frac{g(U_n)}{U_n+\beta_n}\big(\frac{n}{n+1}\big)^d +R\Big)=\frac{g(U_n)}{(U_n+\beta_n)^{1-(n/(n+1))^d}}\big(\frac{n}{n+1}\big)^d +O\Big(\frac{g(U_n)^2}{(U_n+\beta_n)^{2-(n/(n+1))^d}}\Big)$$
and from $U_n+\beta_n=c_n^{(n+1)^d}$, $$\delta_n^{(1)} =\frac{g(U_n)}{c_n^{(n+1)^d-n^d}}\big(\frac{n}{n+1}\big)^d+O\Big(\frac{g(U_n)^2}{(U_n+\beta_n)^{2-(n/(n+1))^d}}\Big)<1$$.

\end{proof}

The prime number theorem implies that the mean value of $g(U_n)$ is on the order of $\ln{U_n}$. Therefore, in the case $d=1$, the condition above implies that it would be impossible \footnote[1]{Following the work of Yitang Zhang and the Polymath project set up by Terence Tao (2), it is known that there are infinitely many prime gaps less than 246. But since the mean prime gap around $U_n$ is $\ln{U_n}$, the probability for $g(U_n)\leq 246$ gets lower and lower as $U_n$ increases, and finding an infinite sequence $u(n)$ such that almost all gaps are less than 246 has probability 0. So it is safe here to conjecture that this cannot happen.} to build an infinite sequence of primes $u(n)$ because the constant $c$ would have to be gradually increased to be kept above $g(U_n)\approx \ln{U_n}$ and can never converge. That is why the exponent $d$ is taken greater than 1.\\
\\
It is important to note here that since Cramer's conjecture on prime gaps implies  $g(U_n) <H\ln{^2 U_n}$ for some $H$ on the order of 1, and assuming that $U_n$ is large enough, for example on the order of $U_n\approx 10^{100}$, then $g(U_n)^2/(U_n+\beta_n)^{2-(n/(n+1))^d}<g(U_n)^2/(U_n+\beta_n)<\ln{^2 U_n}/(U_n+\beta_n)\approx 2.8\; 10^{-91}$. The big O term in the lemma  1 condition can be ignored provided $U_n$ is sufficiently large. Given $\epsilon>0$ as small as desired the condition in lemma 1 can be written $$\delta_n^{(1)}=\frac{g(U_n)}{c_n^{(n+1)^d-n^d}}\big(\frac{n}{n+1}\big)^d < 1-\epsilon$$ with $U_n$ large enough so that $\epsilon$ is greater than the remainder term.\\
\\
Assuming Cramer's conjecture, the sufficient condition in lemma 1 can be written in the form $c^{(n+1)^d-n^d}/n^d (n+1)^d > (\ln{c})^2$ which gives a lower bound on the minimum number $n$ of consecutive prime terms that must be built for the beginning of the sequence. Above that number of prime terms, the conditions of lemma 1 and lemma 2 below are achieved. For example, with $d=1.5$ and $c =2.268$ this condition gives $n \geq 136$.  With $d=1.2$ and with $c \leq 20$ the condition is $n \geq 19351$, i.e. way out of practical range, but this is a very restrictive condition...\\
\\
Lemma 1 provides a condition for adding one prime term to an existing sequence of $n$ terms without changing $u(n)$. The same approach can be used to obtain the condition for none of the first $n$ terms to be changed, this is the 

\begin{lemma}

If all $n$ first terms, $n>1$, of the sequence $u(k)=\lfloor c_n^{k^d} \rfloor$ are prime numbers and the constant is $c_n=u(n)^{1/n^d}$, a sufficient condition given $n-1$ numbers $0<\epsilon_k<1$, $1\leq k <n$, for adding the next prime term $u(n+1)$ immediately above $c_n^{(n+1)^d}$ using a new constant $c_{n+1}=u(n+1)^{1/(n+1)^d}> c_n$ that doesn't change the value of any of the first $n$ terms is $$\forall k \leq n-1,\; \; \delta_k < 1-\epsilon_k - \delta_n^{(1)} \frac{(k/n)^d}{c_n^{n^d-k^d}}$$
where $\delta_k=c_n^{k^d}-\lfloor c_n^{k^d} \rfloor$ and $\delta_n^{(1)}<1$ is given by lemma 1, and where $u(n)$ is sufficiently large for the remainder terms to be less than $0<\epsilon_k <1$.

\end{lemma}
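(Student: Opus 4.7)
\medskip
\noindent\textbf{Proof plan.} The plan is to run the argument of Lemma 1 in parallel at every index $k\leq n-1$, tracking how much the switch from $c_n$ to $c_{n+1}$ shifts the fractional part of $c^{k^d}$. Since $c_{n+1}\geq c_n$, the old value $u(k)=\lfloor c_n^{k^d}\rfloor$ is preserved if and only if the shift $c_{n+1}^{k^d}-c_n^{k^d}$, added to $\delta_k$, stays strictly below $1$. So the whole task reduces to bounding this shift in terms of $\delta_n^{(1)}$.

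To carry this out I would repeat verbatim the Taylor expansion of Lemma 1 with the exponent $(k/(n+1))^d$ in place of $(n/(n+1))^d$:
\begin{equation*}
c_{n+1}^{k^d}=c_n^{k^d}\Big(1+\frac{g(U_n)}{c_n^{(n+1)^d}}\Big)^{(k/(n+1))^d}=c_n^{k^d}\Big(1+\frac{g(U_n)}{U_n+\beta_n}\big(\tfrac{k}{n+1}\big)^d+R_k\Big),
\end{equation*}
with $R_k=O\big(g(U_n)^2/(U_n+\beta_n)^2\big)$. Subtracting $c_n^{k^d}$ and using $U_n+\beta_n=c_n^{(n+1)^d}$ gives
\begin{equation*}
c_{n+1}^{k^d}-c_n^{k^d}=\frac{g(U_n)}{c_n^{(n+1)^d-k^d}}\big(\tfrac{k}{n+1}\big)^d+O\Big(\frac{g(U_n)^2}{(U_n+\beta_n)^{2-(k/(n+1))^d}}\Big).
\end{equation*}

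The next step is the algebraic identity that converts this into an expression in $\delta_n^{(1)}$. To leading order, Lemma 1 says $g(U_n)/c_n^{(n+1)^d-n^d}=\delta_n^{(1)}\,((n+1)/n)^d$, and substituting into the previous display collapses the awkward powers to
\begin{equation*}
\frac{g(U_n)}{c_n^{(n+1)^d-k^d}}\big(\tfrac{k}{n+1}\big)^d=\delta_n^{(1)}\cdot\frac{(k/n)^d}{c_n^{n^d-k^d}}.
\end{equation*}
Combining with the preservation inequality $\delta_k+(c_{n+1}^{k^d}-c_n^{k^d})<1$ immediately produces the condition stated in the lemma, with $\epsilon_k$ absorbing the big-$O$ remainder.

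The only real obstacle is controlling the remainders $R_k$ uniformly in $k$. Since $(k/(n+1))^d\leq 1$ for $k\leq n-1$, each remainder is dominated by $g(U_n)^2/(U_n+\beta_n)$, and under Cramer's conjecture with $u(n)$ taken large enough — exactly as in the discussion following Lemma 1 — this quantity is made smaller than any preassigned $\epsilon_k>0$. With that uniform bound, the inequalities force $\lfloor c_{n+1}^{k^d}\rfloor=\lfloor c_n^{k^d}\rfloor$ for every $k\leq n-1$, so none of the first $n$ terms changes while $u(n+1)$ is, by construction, the next prime above $U_n$.
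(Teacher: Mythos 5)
Your argument is correct and follows essentially the same route as the paper: you redo the Lemma 1 expansion at each index $k\le n-1$, rewrite the resulting shift $g(U_n)c_n^{k^d-(n+1)^d}(k/(n+1))^d$ as $\delta_n^{(1)}(k/n)^d/c_n^{n^d-k^d}$ via the leading-order expression for $\delta_n^{(1)}$, and absorb the uniformly small remainders into the $\epsilon_k$, exactly as the paper does (you merely write out explicitly the expansion the paper invokes with ``the approach used in the proof of lemma 1 immediately leads to'').
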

\begin{proof}

Let the $\delta_k$ be the fractional parts of $c_n^{k^d}$ for $k\leq n$, the approach used in the proof of lemma 1 immediately leads to the sufficient condition $$\forall k \leq n, \;\;\delta_k + \frac{g(U_n)}{c_n^{(n+1)^d-k^d}}\big(\frac{k}{n+1} \big)^d < 1-\epsilon_k $$ with remainder terms smaller than $\epsilon_k$ provided $u(n)$ is sufficiently large.\\ 
Since $\delta_n=0$ the condition on $\delta_n^{(1)}$ is simply $\delta_n^{(1)} <1-\epsilon_n$, as in lemma 1.  This implies that $\delta_k^{(1)} = \delta_n^{(1)} (k/n)^d/c_n^{n^d-k^d}$ and the conditions become $$\forall k \leq n-1,\; \; \delta_k < 1-\epsilon_k - \delta_n^{(1)} \frac{(k/n)^d}{c_n^{n^d-k^d}}$$ 

\end{proof}

Lemma 2 gives a sufficient condition for keeping unchanged the values of the first $n$ terms in an exponential prime sequence of the form $u(n)=\lfloor c^{n^d} \rfloor$ when adding a new prime term $u(n+1)$ that requires increasing slightly the constant $c$. The condition can be written in the form $$\forall k \leq n-1,\; \; \delta_k^{(1)} =\delta_k + \frac{g(U_n)}{c_n^{n^d-k^d}}\Big(\frac{k}{n}\Big)^d< 1-\epsilon_{1k}$$ where the $\delta_k^{(1)}=c_{n+1}^{k^d}-\lfloor c_{n+1}^{k^d} \rfloor$ are the new fractional parts recalculated for the first $n-1$ terms with the new constant $c_{n+1}$. Let's now look at the effect on the $\delta$ fractional parts of adding one more prime term $u(n+2)$ to the sequence.\\
\\
As seen above, $c_{n+1}=u(n+1)^{1/(n+1)^d}$ and $\delta_{n+1}=0$. If $\lfloor c_{n+1}^{(n+2)^d} \rfloor$ is prime there is no change on any of the fractional parts of the first $n+1$ terms of the sequence and the constant remains $c_{n+1}$. In the case where $\lfloor c_{n+1}^{(n+2)^d} \rfloor$ is not prime, we pick the first prime following that number and assign that value to $u(n+2)$. The new constant is $c_{n+2}=u(n+2)^{1/(n+2)^d} > c_{n+1}$. Applying lemma 2 to this sequence of $n+1$ prime terms (instead of $n$ terms) we immediately obtain $$\forall k \leq n,\; \; \delta_k^{(2)} =\delta_k^{(1)} + \frac{g(U_{n+1})}{c_{n+1}^{(n+1)^d-k^d}}\Big(\frac{k}{n+1}\Big)^d< 1-\epsilon_{2k}$$  where $\delta_k^{(2)}$ are the fractional parts of the $c_{n+2}^{k^d}$ for $k\leq n+1$. Using the formula for $\delta_k^{(1)}$ in lemma 2 we obtain $$\forall k \leq n,\; \; \delta_k^{(2)} =\delta_k +\frac{g(U_n)}{c_n^{n^d-k^d}}\Big(\frac{k}{n}\Big)^d+  \frac{g(U_{n+1})}{c_{n+1}^{(n+1)^d-k^d}}\Big(\frac{k}{n+1}\Big)^d < 1-\epsilon_{2k}$$
Note that this is the worst case where $\lfloor c_n^{(n+1)^d} \rfloor$ and $\lfloor c_{n+1}^{(n+2)^d} \rfloor$ are not prime, which requires to increase the constant $c$ for these two additional terms. That generates the greatest possible increase of the fractional parts $\delta_k^{(2)}$. Adding more terms, up to $u(n+j)$, and again assuming the worst case where at each step the constant must be increased, we get $$\forall k \leq n+j-2,\; \; \delta_k^{(j)} =\delta_k +\frac{g(U_n)}{c_n^{n^d-k^d}}\Big(\frac{k}{n}\Big)^d +...+ \frac{g(U_{n+j-1})}{c_{n+j-1}^{(n+j-1)^d-k^d}}\Big(\frac{k}{n+j-1}\Big)^d < 1-\epsilon_{jk}$$
where the respective $0<\epsilon_{jk}<1$ are greater than the sum of all second order terms, and made very small by choosing $n$ and $U_n$ large enough. We have proved the

\begin{lemma}

If $j$ new prime terms are added to a sequence like in lemma 2 of $n$ prime terms $u(k)$, $1\leq k \leq n$, and in the worst case where the constant must be increased at each step i.e. $c_n<c_{n+1}<...<c_{n+j}$, the fractional parts of the first $n$ terms initially equal to $\delta_k=c_n^{k^d}-\lfloor c_n^{k^d}\rfloor$ become $\delta_k^{(j)}$ where
$$\forall k \leq n,\; \; \delta_k^{(j)} =\delta_k +\frac{g(U_n)}{c_n^{n^d-k^d}}\Big(\frac{k}{n}\Big)^d +...+ \frac{g(U_{n+j-1})}{c_{n+j-1}^{(n+j-1)^d-k^d}}\Big(\frac{k}{n+j-1}\Big)^d < 1-\epsilon_{jk}$$
where the respective $0<\epsilon_{jk}<1$ are greater than the sum of all second order terms, and made very small by choosing $n$ and $U_n$ large enough.

\end{lemma}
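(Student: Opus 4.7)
The plan is to prove Lemma 3 by induction on $j$, iterating Lemma 2, exactly following the pattern laid out in the discussion preceding the statement. The base case $j=1$ is a direct restatement of Lemma 2: using the formula $\delta_k^{(1)}=\delta_k+g(U_n)/c_n^{n^d-k^d}\cdot(k/n)^d$ (which appears in the paragraph after Lemma 2's proof), the claimed bound reduces to the inequality already established there.

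For the inductive step, assume the sequence has been extended through $u(n+j)$ in the worst case, yielding constant $c_{n+j}$ and fractional parts $\delta_k^{(j)}$ satisfying the hypothesized formula for $k\le n+j-1$. Since we are again in the worst case, $\lfloor c_{n+j}^{(n+j+1)^d}\rfloor$ is composite, so we raise the constant to $c_{n+j+1}=u(n+j+1)^{1/(n+j+1)^d}$ and invoke Lemma 2 with the sequence of $n+j$ prime terms in the role of the original $n$-term sequence. Lemma 2 yields, for each $k\le n+j-1$,
$$\delta_k^{(j+1)}=\delta_k^{(j)}+\frac{g(U_{n+j})}{c_{n+j}^{(n+j)^d-k^d}}\Big(\frac{k}{n+j}\Big)^d+\text{(second-order remainder)}.$$
Substituting the inductive hypothesis for $\delta_k^{(j)}$ telescopes the correction into the required sum of $j+1$ contributions, and closes the induction.

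The one technical point is bookkeeping of the cumulative remainder. Each application of Lemma 2 introduces a big-$O$ term of order $g(U_{n+i})^2/(U_{n+i}+\beta_{n+i})^{2-(k/(n+i+1))^d}$. Under Cramer's conjecture $g(U_{n+i})\ll\ln^2 U_{n+i}$, so each individual remainder is doubly-exponentially small in $(n+i)^d$ — the paper already observed a size of order $10^{-91}$ at $U_n\approx 10^{100}$. The sum of any finite number of such terms is absorbed into $\epsilon_{jk}$ by choosing $n$ and $U_n$ large enough, uniformly in $j$ over any range of practical interest, because the contributions decay super-geometrically in $i$.

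The only mild obstacle I anticipate is verifying that Lemma 2's hypothesis is genuinely available at every intermediate stage, i.e.\ that the inequality required before each invocation is met. But this is exactly the inductive statement at the previous step: assuming $\delta_k^{(i)}<1-\epsilon_{ik}$ with a margin larger than the next correction term is what permits the $(i{+}1)$-th application of Lemma 2, and this is precisely what the induction carries forward. No new estimate beyond those in Lemmas 1 and 2 is required, so the proof is essentially formal.
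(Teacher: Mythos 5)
Your induction on $j$, iterating Lemma 2 on the successively extended sequences and absorbing the accumulated second-order remainders into the $\epsilon_{jk}$, is exactly how the paper argues: Lemma 3 is stated after the text derives $\delta_k^{(2)}$ from $\delta_k^{(1)}$ by applying Lemma 2 to the $(n+1)$-term sequence and then extends this step by step to $u(n+j)$, with the words ``we have proved the'' signalling that this iterative argument is the proof. So your proposal is correct and takes essentially the same route as the paper.
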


The best way to understand why these three lemmas can explain how an infinite exponential prime sequence can be built, is to actually build one!

\section{Algorithm for building exponential prime sequences}

The following algorithm (A) is applied to build a sequence $u(n)=\lfloor ac^{n^d}+b \rfloor$ for some fixed $a$, $b$, $d>1$. Let $a=1$, $b=0$, and the sequence starts at $n=1$ with the smallest prime number $u(1)=2$ which implies $c\geq 2$. The constant is initialized at the minimum value $c=2$.\\
\\
0.   $n=1$, $u(1)=2$, $c=2$, $a=1$, $b=0$\\
1.   $n=n+1$\\
2.   $q=\lfloor ac^{n^d}+b\rfloor$\\
3.   $p=\text{smpr}(q)$\\
4.   If $p=q$ then $u(n)=p$, go to 1.\\
5.   $c=\big((p-b)/a\big)^{1/n^d}$\\
6.   $u(n)=p$\\
7.   $k=1$\\
8.   $q=\lfloor ac^{k^d+b} \rfloor$\\
9.   If $q \neq u(k)$ and q not prime then $p=\text{smpr}(q)$, $n=k$, go to 5.\\
10. If q is prime then u(k)=q\\
11. If $k<n-1$ then $k=k+1$, go to 8.\\
12. go to 1.\\
\\
The function $\text{smpr}(q)$ returns the smallest prime greater or equal to $q$.\\
By design, this algorithm builds the increasing sequence of prime numbers of the form $\lfloor ac^{n^d}+b\rfloor$ that has the smallest possible growth rate. At each step the smallest prime number is selected. Hence, given $a$, $b$, $d$, algorithm (A) generates an exponential sequence with the smallest possible constant $c$.\\
Computations must be done at full precision, i.e. with a precision sufficient to ensure that the first 5 decimals of $c^{n^d}$ are correct, whatever the size of the integral part.\\
Let's apply the algorithm with $d=1.5$.  Below is the list of the main steps up to reaching numbers with 100 digits.
\\
$u(1)=2$, $c=2$\\ 
$u(2)=7 \; \; \rightarrow \; \; u(3)=37$, $c=37^{1/3\sqrt{3}}$\\
The symbol $\rightarrow$ indicates that the same constant $c$ is used for the next term, otherwise a new value of $c$ is calculated from each new term $u(n)$. Among the first 50 terms of this sequence the $\rightarrow$ happens only for $u(3)$.\\ 
$u(4)=263$, $c=263^{1/4\sqrt{4}}$\\ 
$u(5)=2411$, $c=2411^{1/5\sqrt{5}}$\\ 
$u(6)=27941$, $c=27941^{1/6\sqrt{6}}$\\
$u(7)=400823$, $c=400823^{1/7\sqrt{7}}$\\
\\
Back to $u(5)=2417$\\ 
``Back to'' means that the test $q \neq u(k)?$ at step 9 in the algorithm is positive and the sequence must be re-calculated from index $k$.\\
$u(6)=28019$\\ 
$u(7)=402197$\\ 
$u(8)=7035797$\\
\\
Back to $u(7)=402221$\\
\\
Back to $u(6)=28027$\\ 
$u(7)=402341$\\ 
$u(8)=7038841$\\ 
$u(9)=148159931$\\ 
$u(10)=3712563089$\\ 
$u(11)=109757564149$\\ 
$u(12)=3798944559521$\\ 
$u(13)=152911104014639$\\ 
$u(14)=7115199903124633$\\ 
$u(15)=380726791842607163$\\ 
$u(16)=23316218510440590877$\\ 
$u(17)=1627263370489435571639$\\ 
$u(18)=128918882760677617873919$\\
\\
Back to $u(17)=1627263370489435571681$\\ 
$u(18)=$128918882760677617877491\\ 
$u(19)=$11552650889301910967243591\\ 
$u(20)=$1167145691555195174453680873\\ 
$u(21)=$132533994626692147587330006821\\ 
$u(22)=$16868027334991412027934269842393\\ 
$u(23)=$2399919102359943086759656808542601\\ 
$u(24)=$380767545057477655978563593953193167\\
$u(25)=$67213615800749068530221664858688432411\\ 
$u(26)=$13172010994867150677120721027635155958683\\ 
$u(27)=$2859976570693485153112034466187708219743257\\
$u(28)=$686682528617038617430680064268466968498081537\\ 
$u(29)=$181989044798183541663688889370297517515196111517\\
$u(30)=$53147632265048501348397745176098125110334396704491\\
$u(31)=$17075104848461039007372851133376250502663750204947827\\ 
$u(32)=$6025728503245542099163202230622130655546040081475114503\\
$u(33)=$2332277743697360397390383770740373981929559594283584464517\\
$u(34)=$988695902920484127543775248274484931244783811540124044143033\\
$u(35)=$458428319483198619698559581351272118153426311552062470034569911\\
$u(36)=$232191407352013579551444284181079064953131941642986951678503259733\\
$u(37)=$128306935177570479232920344024389038414863778500648075219839796428039\\
$u(38)=$77262471513030250393502853824944255913200135566618300105631736306980607\\
$u(39)=$50641574378922399251192330014656382967577822893810504887594849344704551703\\
$u(40)=$36090320621425216445497140034432289541679239492008165225937294068619639467327\\
$u(41)=$27935854540311568015539342808807285187849565278956205286384010899263850602321327\\
$u(42)=$2346283232850058476656838232717336547807943004886017790987758154112398972036101\\6661\\
$u(43)=$2136099973721157678533842243132596366610288351890890567834808941420975605705163\\5953587\\
$u(44)=$2106083963954103006339535124533411418548377237436001619414293044282861289173777\\9289631637\\
$u(45)=$2246710501681377754448511625819629174477838687607396877814751789048416221385980\\3683513534369\\
$u(46)=$2590916996541251525250668114899009574972660678009900231450290517450857361774622\\8489871748546723\\
$u(47)=$3227193447206512086165538429323702740932449404490509579108859474670239870494237\\6733125054278699441\\
\\
$u(47)=c_{47}^{47^{1.5}}$ has 98 digits and $U_{47}=c_{47}^{48^{1.5}}$ has 101 digits. The largest gaps $g(U_n)$ relative to $\ln{U_n}$ are for $u(31)$ and $u(34)$ and are less than 2.5 times $\ln{U_n}$. That shows that, indeed, gaps close to Cramer's bound are rare.\\
The fractional parts at this stage $\delta_k=c_{47}^{k^{1.5}}-\lfloor c_{47}^{k^{1.5}} \rfloor$ for $k \leq 47$ are listed in Fig.1.\\

\begin{figure}[H]
\centering
\includegraphics[width=16cm]{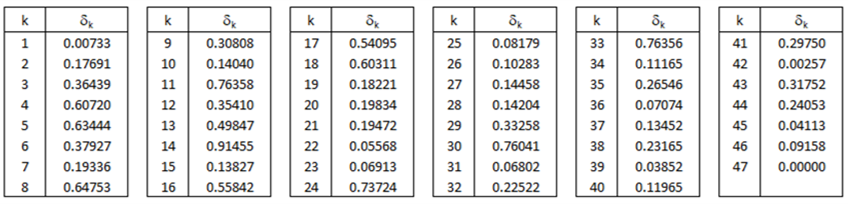}
\caption{Fractional parts of the first 47 terms of the sequence.}
\end{figure}

Assuming Cramer's conjecture on prime gaps, the remainder term $g(U_{47})^2/(U_{47}+\beta)^{2-(47/48)^{1.5}}$ corresponding to $\delta_{47}$ is less then than $H^2(\ln{U_{47}})^4/U_{47}^{2-(47/48)^{1.5}}=H^2 \times 4.9\;10^{-95}$ and remainder terms for $\delta_{k}$, $k<47$, are even smaller. The actual gap is $g(U_{47})=282.09792$, taking $\epsilon=10^{-90}$, the conditions in lemma 2 are applied with $\delta_{47}^{(1)}=g(U_{47})/c_{47}^{48^{1.5} - 47^{1.5}}(47/48)^{1.5}=0.20333$.\\
The condition for $\delta_{46}=0.09158$ is $0.09158 <1-10^{-90}-0.20333\times 0.000777\approx1-0.000158$.  Even in the unlikely case where $g(U_{47})$ would reach Cramer's bound, i.e. $g(U_{47})=H(\ln{U_{47}})^2=53696.9H$,with $H$ on the order of 1, this condition would be easily met. The condition for $\delta_{45}=0.04113$ is $0.04113<1-10^{-90}-0.20333\times 6.5\;10^{-7}\approx1-1.326\;10^{-7}$ which is even easier to meet. The next condition for $\delta_{44}=0.24053$ is $0.24053<1-1.2\;10^{-10}$. And the lower bound on the remaining $1-\delta_k$ for $k<44$ is reduced by a factor exceeding 1000 at each new index, so clearly the condition is easily met for $n=47$. Adding a new prime term $u(48)$ to the sequence with the corresponding new constant $c$ cannot change the values of the first 47 terms.\\
\\
The following terms of this sequence are:\\
$u(48)=$4338140817630590214054362195048689886488509155467665568696499290296446990848939\\8427666331327222715203\\
$u(49)=$6288437070446358179496545645990968999812362882489779253350947128112646599131068\\9433673932756599509788977\\
$u(50)=$9822141176049090549430695603783685455105957169682029891558596067419927018354320\\0955293790927131718028816721\\
$u(51)=$1651840421326810217697918871887828016986002578950753669357369582817169506165749\\05043497577386969941967834911183\\
$u(52)=$2988905549043481124877205279918506435106071579929536761949545055372561077203559\\34420944042180902829312816977339237\\
$u(53)=$5814754291308033646843039713558479910118847409036100671880024834313735891706342\\42894339197382912255444770492516788529\\
$u(54)=$1215422420362089857501177287341544041537856372600749118270090298422372468147065\\349680471669738540241709617786322609697869\\
$u(55)=$2727782697857489146106788892659849021703482123147301564886930368818656714891925\\222491228227878201405208218605861730056224351\\
\\
$u(55)$ has 124 digits and it requires 141 decimals (rounded) accuracy on $c$, i.e. \\
$c=$2.007334080333576928956181316653840772831434461478017370705960293321699050171486297\\
563613802880600998767775141388245083072913544366923411581602

\section{Existence of infinite exponential prime sequences}

Algorithm (A) is used to calculate the first $n$ prime terms of the sequence $u(k)=\lfloor c^{k^d}\rfloor$ with $d>1$ and where $c_n=u(n)^{1/n^d}$. We continue applying algorithm (A) to generate the next $j$ terms. By construction $c_n\leq c_{n+1}\leq...\leq c_{n+j-2}\leq c_{n+j}$.  Given the first $n$ terms, to prove the existence of an infinite prime sequence $u(n+j)=\lfloor c^{(n+j)^d}\rfloor$ for $j\geq 1$ it is sufficient to prove that $c_{n+j}$ is bounded. This is the

\begin{theorem}

Given the first $n$ terms of the sequence $u(k)=\lfloor c_n^{k^d}\rfloor$, $d>1$, computed with algorithm (A), where $c_n=u(n)^{1/n^d}$, and with $(n+1)^d$ sufficiently large, and assuming Cramer's conjecture on prime gaps $g(p)<H\ln{^2 p}$ for any prime $p>7$, the increasing sequence ${c_{n+j}}$ for $j\geq 1$ is bounded by
$$c_{n+j} < c_n + 2H\ln{^2 2}\sum_{k=1}^{j}\frac{(n+k)^d}{2^{(n+k)^d}}$$
therefore it converges to a constant $c$ that is the smallest constant such that all terms of the infinite sequence ${\lfloor c^{k^d}\rfloor}$ for $k \geq 1$ are distinct prime numbers.

\end{theorem}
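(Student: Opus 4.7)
The plan is to bound each increment $c_{n+j+1}-c_{n+j}$ explicitly and then telescope. First, the sequence $\{c_{n+j}\}$ is non-decreasing by construction of algorithm (A), and for $(n+1)^d$ sufficiently large Lemma~3 guarantees that adding subsequent prime terms leaves the earlier floors intact, so the ``back to'' step is never triggered for $j\geq 1$. Each increment is therefore either zero (when $\lfloor c_{n+j}^{(n+j+1)^d}\rfloor$ is already prime) or governed by the exact relation $c_{n+j+1}^{(n+j+1)^d}-c_{n+j}^{(n+j+1)^d}=g(U_{n+j})$.

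In the nontrivial case I would write
\[c_{n+j+1}=c_{n+j}\Big(1+\frac{g(U_{n+j})}{c_{n+j}^{(n+j+1)^d}}\Big)^{1/(n+j+1)^d}\]
and apply the elementary inequality $(1+x)^{1/m}\leq 1+x/m$ for $x\geq 0$, $m\geq 1$, obtaining
\[c_{n+j+1}-c_{n+j}\leq \frac{g(U_{n+j})}{(n+j+1)^d\, c_{n+j}^{(n+j+1)^d-1}}.\]
Cramer's conjecture together with $\ln U_{n+j}\leq (n+j+1)^d\ln c_{n+j}$ bounds $g(U_{n+j})$ by $H(n+j+1)^{2d}(\ln c_{n+j})^2$, yielding
\[c_{n+j+1}-c_{n+j}\leq H(n+j+1)^d\,\frac{(\ln c_{n+j})^2}{c_{n+j}^{(n+j+1)^d-1}}.\]
The crucial auxiliary observation is that the function $h(c)=(\ln c)^2/c^{M-1}$ with $M=(n+j+1)^d$ has logarithmic derivative $[2-(M-1)\ln c]/(c\ln c)$ and is therefore decreasing on $c>e^{2/(M-1)}$; for $(n+1)^d$ large enough this interval covers $[2,\infty)$, so $h(c_{n+j})\leq h(2)=2(\ln 2)^2/2^M$. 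Substituting gives the per-step bound $c_{n+j+1}-c_{n+j}\leq 2H\ln^2 2\,(n+j+1)^d/2^{(n+j+1)^d}$, and telescoping reproduces the theorem's bound.

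Because $2^{(n+k)^d}$ grows super-exponentially in $k$, the bounding series converges, so $\{c_{n+j}\}$ is monotone and bounded, hence convergent to some finite limit $c$. To identify $c$ with the constant of the desired sequence I would note that $c_{n+j}^{k^d}$ increases in $j$ while Lemma~3 keeps its fractional part below $1-\epsilon_{jk}$; the integer part therefore stabilizes and $\lfloor c^{k^d}\rfloor=u(k)$ is prime for every $k\geq 1$. Minimality is inherited from algorithm (A), which at every stage picks the smallest admissible prime, so no smaller constant can produce an all-prime sequence of this form. The main obstacle is the combination of two places where the hypothesis ``$(n+1)^d$ sufficiently large'' is spent: the monotonicity of $h$ on $[2,\infty)$, and the Lemma~3 guarantee that ``back to'' corrections disappear beyond index $n$; once these are secured, the remainder of the argument is routine telescoping.
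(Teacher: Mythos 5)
Your proposal is correct and follows essentially the same route as the paper: bound each increment via Cramer's conjecture and the binomial inequality, exploit the monotone decrease of $c\ln^2 c/c^{(n+k)^d}$ (your $h(c)=(\ln c)^2/c^{M-1}$ is the same function) for $(n+1)^d\geq 1+2/\ln 2$ to replace $c_{n+j}$ by $2$, then telescope and use convergence of the series. Your explicit appeal to Lemma~3 for the stabilization of the earlier terms only makes explicit what the paper leaves implicit in its ``sufficiently large'' hypothesis.
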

\begin{proof}

If $\lfloor c_n^{(n+1)^d}\rfloor$ is prime then the constant is unchanged, i.e. $c_{n+1}=c_n$. And if $\lfloor c_n^{(n+1)^d}\rfloor$ is not prime, the algorithm picks for the prime number immediately above $\lfloor c_n^{(n+1)^d}\rfloor$ and assigns it to $u(n+1)$. The constant becomes $c_{n+1}=u(n+1)^{1/(n+1)^d}$. Cramer's conjecture implies that 
$$u(n+1)-c_n^{(n+1)^d}<H\ln{^2 c_n^{(n+1)^d}}=H(n+1)^{2d}\ln{^2 c_n}$$
hence $$c_{n+1}<c_n\Big(1+\frac{H(n+1)^{2d}\ln{^2 c_n}}{c_n^{(n+1)^d}}\Big)^{1/(n+1)^d}<c_n+\frac{H(n+1)^d c_n\ln{^2 c_n}}{c_n^{(n+1)^d}}$$
For the next term $u(n+2)$ the same approach leads to $$c_{n+2}<c_{n+1}+\frac{H(n+2)^d c_{n+1}\ln{^2 c_{n+1}}}{c_{n+1}^{(n+2)^d}}\leq c_{n+1}+\frac{H(n+2)^d c_n\ln{^2 c_n}}{c_n^{(n+2)^d}}$$
since $c_{n+1} \geq c_n \geq 2$, and for $(n+1)^d \geq 1+2/\ln{2}\geq 1+2/\ln{c}$ and $c \geq 2$ the function $F(c)=c\ln{^2 c}/c^{(n+1)^d}$ is decreasing. Combining the two last results $$c_{n+1} < c_n + \frac{H(n+1)^d c_n\ln{^2 c_n}}{c_n^{(n+1)^d}} + \frac{H(n+2)^d c_n\ln{^2 c_n}}{c_n^{(n+2)^d}}$$
and extending this to $n+j$ $$c_{n+j} < c_n + Hc_n\ln{^2 c_n}\sum_{k=1}^{j}\frac{(n+k)^d}{c_n^{(n+k)^d}} \leq c_n + 2H\ln{^2 2}\sum_{k=1}^{j}\frac{(n+k)^d}{2^{(n+k)^d}}$$
provided $(n+1)^d\geq 1+2/\ln{2}=3.88539...$.

\end{proof}

Algorithm (A) successfully generated the first 47 terms of the exponential prime sequence $u(k)=\lfloor c_{47}^{k^{1.5}}\rfloor$ with $c_{47}=u(47)^{1/47^{1.5}}=$2.00733408... and $(n+1)^d=48^{1.5}=332.55...$ therefore, as per Theorem 1, an infinite sequence of primes exists with a constant that converges to $c$ with $c_{47}<c<c_{47}+\epsilon$ where $$\epsilon = 2H\ln{^2 2}\sum_{k\geq 1}(47+k)^{1.5}/2^{(47+k)^{1.5}}=H\times 2.490052430529\;10^{-98}$$
Even if we take a conservative $H=10$ the very crude bound of Theorem 1 remains strong.  In fact, the first 100 decimals of $c_{47}$ are correct, i.e. identical to the decimals of the limit constant $c$.\\
\\ 
Theorem 1 can be extended immediately to sequences with $a=1$ and $b=1$ starting at $n=0$.\\
It is possible to generalize Theorem 1 to sequences with $a \neq 1$ and $b \neq 0$ following the same approach, i.e. given the first $n$ terms of the exponential prime sequence $u(k)=\lfloor ac_n^{k^d}+b\rfloor$ with $1 \leq k \leq n$, $a>0$, $b\geq 0$, $c_n\geq 2$, $d>1$, where $c_n=\big((u(n)-b)/a\big)^{1/n^d}$, and with $(n+1)^d$ sufficiently large, and assuming Cramer's conjecture on prime gaps $g(p)<H\ln{^2 p}$ for any prime $p>7$, the increasing sequence ${c_{n+j}}$ for $j\geq 1$ converges to a limit $c$ close to $c_n$ leading to an infinite exponential prime sequence of the form $\lfloor ac^{k^d}+b\rfloor$ for $k \geq 1$. However there are some technicalities depending on the values of $a$ and $b$, and depending on the starting index of the sequence at $n=0$ or $n=1$.\\

\begin{theorem}

The infinite exponential prime sequences $u_0(k)=\lfloor c_0^{k\sqrt{k}}\rfloor$, $k\geq 1$, $u_1(k)=1+\lfloor c_1^{k\sqrt{k}}\rfloor$, $k\geq 0$, $v_0(k)=\lfloor 2^{k^{d_0}}\rfloor$, $k\geq 1$, and $v_1(k)=1+\lfloor 2^{k^{d_1}}\rfloor$, $k\geq 0$ exist, and the smallest possible values of their respective constants are $$c_0=2.007334080333576928956181316653... \; \; \; \; c_1=2.267996267706724247328553280725...$$ $$d_0=1.503928524069520633527689067897... \; \; \; \;    d_1=1.73551495003300118213908371144...$$

\end{theorem}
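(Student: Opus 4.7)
My plan is to split the four assertions by which parameter is free. For $u_0$, Theorem 1 applies verbatim with $a=1$, $b=0$, $d=3/2$: the construction of Section 3 already exhibits the first $47$ primes of the sequence and the associated $c_{47}=2.00733408\ldots$ with $(n+1)^d = 48^{3/2}\approx 332.55 \gg 1+2/\ln 2$, so Theorem 1 immediately certifies the existence of the limit $c_0$ with the claimed decimals (Section 3 already observes that the first $100$ decimals of $c_{47}$ coincide with those of $c_0$). For $u_1$, I would invoke the $(a,b)\neq (1,0)$ extension of Theorem 1 announced immediately after its proof, applied with $a=1$, $b=1$, $d=3/2$, starting at $k=0$ where $u_1(0)=1+\lfloor c_1^0\rfloor = 2$. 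Running algorithm (A) in this variant until $(n+1)^{3/2}$ exceeds the threshold reproduces $c_1$ and bounds the tail by the same geometric estimate.

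For $v_0$ and $v_1$ the free parameter is $d$ rather than $c$, so Theorem 1 does not apply directly and I would prove a parallel of it in this setting. Define a variant (A$'$) of the algorithm in which, whenever $\lfloor 2^{(n+1)^{d_n}}+b\rfloor$ fails to be prime, we pick the next prime $u(n+1)$ and set $d_{n+1}=\ln(\log_2(u(n+1)-b))/\ln(n+1)$, i.e.\ the unique value of $d$ for which $2^{(n+1)^d}=u(n+1)-b$. Cramer's bound on the prime gap gives $u(n+1)-b-2^{(n+1)^{d_n}} < H(n+1)^{2d_n}\ln^2 2$; taking $\log_2$ and expanding to first order yields
\[
d_{n+1}-d_n < \frac{H(n+1)^{d_n}\ln 2}{2^{(n+1)^{d_n}}\ln(n+1)} + O\!\left(\frac{(n+1)^{d_n}}{2^{2(n+1)^{d_n}}}\right),
\]
and summing over $j\geq 1$ produces an absolutely convergent tail bound
\[
d_{n+j}-d_n < H\ln 2\,\sum_{k=1}^{j}\frac{(n+k)^{d_n}}{2^{(n+k)^{d_n}}\ln(n+k)}
\]
analogous to the one in Theorem 1, so $d_n$ converges to a finite limit $d_0$ (resp.\ $d_1$). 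The counterpart of Lemma 2 has to be re-derived here: an increase of $d$ must not disturb the integer parts of $2^{k^d}$ for $k\leq n$. Since $\partial_d\,2^{k^d} = 2^{k^d}k^d(\ln k)(\ln 2)$, the perturbation at index $k$ is at most a factor $(k^d\ln k)/((n+1)^d\ln(n+1))$ times the one at index $n+1$, which is $<1$ by construction and decays geometrically in $k$ under Cramer's bound, playing the role that $(k/n)^d/c_n^{n^d-k^d}$ plays in the proof of Lemma 2. Running (A$'$) numerically until the tail is small enough then certifies the displayed digits of $d_0$ and $d_1$.

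The \textbf{main obstacle} is precisely the verification for Group 3: with $c$ fixed at $2$, the sensitivity to $d$ is channelled through a double exponential $2^{k^d}$, so the Lemma-1/Lemma-2 bookkeeping is heavier than in Theorem 1 and one must check carefully that the higher-order terms of the Taylor expansion are dominated by the positive margin $1-\epsilon$ available in Lemma 2. The saving feature is that the cross-index perturbation ratio is still essentially $k^d/(n+1)^d$, which decays geometrically, so the same tail-control strategy carries through and the convergence of $d_n$ is as rapid as that of $c_n$ in Theorem 1, after which a direct numerical run of (A$'$) through enough terms pins down the four advertised constants to the stated precision.
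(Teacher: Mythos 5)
Your proposal follows essentially the same route as the paper: $u_0$ and $u_1$ are handled via Theorem 1 and its stated $a\neq1$, $b\neq0$ extension together with the computed initial terms, and $v_0$, $v_1$ via the modified algorithm (the paper's algorithm (B)) with a convergence argument for $d_n$ that the paper only asserts ``follows the same approach''---your explicit tail bound for $d_{n+j}-d_n$ in fact supplies more detail than the paper itself gives. One small correction: in your Lemma-2 analogue the perturbation ratio at index $k$ must carry the factor $2^{k^d-(n+1)^d}$ (the analogue of $1/c_n^{n^d-k^d}$), since $(k^d\ln k)/((n+1)^d\ln(n+1))$ alone is close to $1$ for $k$ near $n$ and provides no decay; with that factor restored, your bookkeeping matches the paper's intent and the rest of the argument goes through.
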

\begin{proof}

This was already established above for the sequence ${u_0(k)}$. The algorithm (A) is applied in the next section with $b=1$ to generate the first 50 terms of the sequence ${u_1(k)}$ which proves its existence, as per Theorem 2. For the sequence ${v_0(k)}$, algorithm (A) is changed to algorithm (B) below\\
\\
0.   $n=1$, $v(1)=2$, $a=1$, $b=0$, $d=1$\\
1.   $n=n+1$\\
2.   $q=\lfloor a 2^{n^d}+b\rfloor$\\
3.   $p=\text{smpr}(q)$ smallest prime $\geq q$\\
4.   If $p=q$ then $v(n)=p$, go to 1.\\
5.   $d=\ln{(\ln{\big((p-b)/a\big)}/\ln{2})}/\ln{n}$\\
6.   $v(n)=p$\\
7.   $k=1$\\
8.   $q=\lfloor a 2^{k^d+b} \rfloor$\\
9.   If $q \neq v(k)$ and q nor prime then $p=\text{smpr}(q)$, $n=k$, go to 5.\\
10. If q is prime then u(k)=q\\
11. If $k<n-1$ then $k=k+1$, go to 8.\\
12. go to 1.\\
\\
Following the same approach as above, similar results are obtained for the convergence of the constant $d$. The first 50 terms of the sequence  ${v_0(k)}$ and the first 40 terms of  ${v_1(k)}$ are listed in the next section, and that is sufficient to establish the proof of existence of the corresponding infinite exponential prime sequences.

\end{proof}

\section{Sequences ${u_1(k)}$, ${v_0(k)}$, and ${v_1(k)}$}

The smallest constant of the sequence of primes $u_1(k)=1+\lfloor c_1^{k\sqrt{k}}\rfloor$, $k\geq 0$, is\\
\\
$c=$ 2.2679962677067242473285532807253717745270422544008187722755908290507837407514695735\\
72173836290992570042731587317115765881934097281139085997966167707719867142706558816816\\
\\
The first 50 terms of this sequence are\\
$u_1(0)=$2, $u_1(1)=$3, $u_1(2)=$11, $u_1(3)=$71, $u_1(4)=$701, $u_1(5)=$9467, $u_1(6)=$168599, $u_1(7)=$3860009, $u_1(8)=$111498091, $u_1(9)=$4002608003, $u_1(10)=$176359202639, $u_1(11)=$9437436701437,\\
$u_1(12)=$607818993573569, $u_1(13)=$46744099128452807, $u_1(14)=$4262700354254812091,\\ 
$u_1(15)=$458091929703695291747\\ 
$u_1(16)=$57691186909930154615407\\ 
$u_1(17)=$8471601990692484416847631\\ 
$u_1(18)=$1443868262009075144775972529\\ 
$u_1(19)=284427125290802666440635500171$\\ 
$u_1(20)=$64508585190035762546819414183627\\ 
$u_1(21)=$16784808124437614932174625219743493\\ 
$u_1(22)=$4993776940608054381884859117354140207\\ 
$u_1(23)=$1693622624439192850596898688425684156253\\ 
$u_1(24)=$652872150303652679829448688982189886530659\\
$u_1(25)=$285293542323304888886344519771469937143536487\\
$u_1(26)=$140963744324927916477364026537883297114648288919\\
$u_1(27)=$78566634770350201851112613233249621352518449931451\\
$u_1(28)=$49283986294561627960640555061706160769690906854022923\\
$u_1(29)=$34720429406470258768183815914888303901101991774709965431\\
$u_1(30)=$27415670531671050330126039277336819265920565545391648119999\\
$u_1(31)=$24216667131296755001605322259453346627082514305800550720897993\\
$u_1(32)=$23885779872975194583233136691346966939859444064807445569345585769\\
$u_1(33)=$26261487112714455828805540159203377478469231163597234973363591457207\\
$u_1(34)=$32131715178214076067056795236138505944078626999475422937130486192963489\\
$u_1(35)=$43681253619905074621450113773989825418769170136321252379454614766606166359\\
$u_1(36)=$65878617212511188886784609388021172886969769867239371275593094704456564034821\\
$u_1(37)=$1100655205873376127672036488377342032903426476893608541350969641654217697637525\\29\\
$u_1(38)=$2034279120239700202948384500215003905339895300407103680071470099009178559685533\\51399\\
$u_1(39)=$4153758739423584412992693371228397619324026582077933149326239916352317421725935\\05853191\\
$u_1(40)=$9358042492569802729520609587374143828463125863469529904345234046340622933898926\\64419564529\\
$u_1(41)=$2323295217517751505932465313890362065365858794901067634239480870103584142721470\\881798817966351\\
$u_1(42)=$6348656563110313224340650976172178458119581049994369593576418852525530460001700\\896268380592778881\\
$u_1(43)=$1907297023897408775079723704632843129987170748455533261915142025781301242769869\\5059821888054480341117\\
$u_1(44)=$6292633153066628726657827171980706234316263190667552947350607625776104397190081\\4762388755632574676703581\\
$u_1(45)=$2277503850410776788204699205894554449275849264408643883808922616406190120942257\\48681562377621385333193418207\\
$u_1(46)=$9033354941221173708960209833371771048958695305824550656593453977805407161259864\\92500646589837326868534021348177\\
$u_1(47)=$3922542367498515739383907442359821276578045509186299075524781752470520738676108\\830582907866229737347691015877811937\\
$u_1(48)=$1862919706700012529715396178894400735117593576660314270680981225141288481855631\\4349681115112062044781842825756034795743\\
$u_1(49)=$9667653742163195831473831157507697480390514020459651370786988940935447881047722\\6127703149269413503506546288674947017660273\\
$u_1(50)=$5477136514912296033167409439922046558480310261848658631193969596882946672229755\\57349070305660634485723007668019082775049887609\\
\\
For more terms refer to OEIS: A338613. Many thanks to Francois Marques for extending the computation of this sequence to 250 terms, and for checking my calculations for other sequences.\\
\\
The smallest constant of the sequence of primes $v_0(k)=\lfloor 2^{k^{d_0}}\rfloor$, $k\geq 1$, is\\
\\
$d_0=$1.50392852406952063352768906789758319919073884958113842900299935065765954756163057\\
643171018908088652246874013053730909403941879035447355628459322003109068400114528356779\\
\\
The first 50 terms of this sequence are\\
$v_0(1)=$2, $v_0(2)=$7, $v_0(3)=$37, $v_0(4)=$263, $v_0(5)=$2437, $v_0(6)=$28541, $v_0(7)=$414893, $v_0(8)=$7368913, $v_0(9)=$157859813, $v_0(10)=$4035572951, $v_0(11)=$122006926709, $v_0(12)=$4328504865941,\\ 
$v_0(13)=$178988464493359, $v_0(14)=$8575347401843113, $v_0(15)=$473485756611713633,\\ 
$v_0(16)=$29985730185033339911, $v_0(17)=$2168685169398896331137,\\ 
$v_0(18)=$178419507110725228550743, $v_0(19)=$16637432012996855576590853,\\
$v_0(20)=$1752619810931482386016278601\\
$v_0(21)=$207929078572140606038877989927\\
$v_0(22)=$27703432292277234733469436505007\\
$v_0(23)=$4134210497118546397108136308834859\\
$v_0(24)=$689315097196202964965495137524369491\\
$v_0(25)=$128115739949050222862582840471603630161\\
$v_0(26)=$26485055543003460184635081751737790490077\\
$v_0(27)=$6077461162212148557066452641614974845496911\\
$v_0(28)=$1544989272770276240261220792252418219068759811\\
$v_0(29)=$434325326596750179697353580045791429425537065921\\
$v_0(30)=$134783620687025226578673820825558384519207336154317\\
$v_0(31)=$46097266778852361448504363290889004635531761516739993\\
$v_0(32)=$17347943886981874420443970420868140946401820400065520071\\
$v_0(33)=$7173092361682485947892270771803835972422928260858453607407\\
$v_0(34)=$3254100011661831611232136493298705387412392951373195138695209\\
$v_0(35)=$1617444487417917100687558729024157723212239417105049205352586907\\
$v_0(36)=$879701520010113221282468977866177955184625610469562029928973771369\\
$v_0(37)=$522884716211311457382881933734811248444218233980070070872631230880863\\
$v_0(38)=$339250490647115743863965814841101611982949652546747643275476764917614079\\
$v_0(39)=$239982118166606784159149195685824747348672639149205242576158268123241401993\\
$v_0(40)=$184884449910265737711094690139028338000389030665443236649379240370457305194787\\
$v_0(41)=$1549609747577097322446755568610194649163966726886266135773368469801078703990188\\39\\
$v_0(42)=$1411559151506163897872958974049812681219479080563778825492380720428054694374622\\01529\\
$v_0(43)=$1396049847900249841453889274688081917851190257461611676730098115483833558916985\\42136891\\
$v_0(44)=$1497659982938839633430353086127319678670790952473702024869758096836687311616581\\11306657657\\
$v_0(45)=$1741147329764838562988837412536464007005022149479339551766136907749539386578187\\18931282624893\\
$v_0(46)=$2191698160494759486839021152154495931428729560431669435573190264676516022253722\\16081049741660611\\
$v_0(47)=$2984519935343738066236502899152490668721610060371436230260615961030373758389536\\63122116770277382949\\
$v_0(48)=$4392930391743512399821722042828698182682533737393436918861945128000278551359973\\20619127387923872014543\\
$v_0(49)=$6983436549827930106243584840356879644728504695564535813426135173162536693968717\\91308763924026132123760607\\
$v_0(50)=$1198060655508707745779557753546421413142255593696270754672148422239141624420672\\823287083415254300395417965501\\
\\
The smallest constant of the sequence of primes $v_1(k)=1+\lfloor 2^{k^{d_1}}\rfloor$, $k\geq 0$, is\\
\\
$d_1=$ 1.73551495003300118213908371144667506108878571626358886817760837991871691036752951\\
43276817340763270056256843848592029698421344113178609149345923128149971012097191105368\\
82943842960802987896\\
\\
The first 40 terms of this sequence are\\
$v_1(0)=$2, $v_1(1)=$3, $v_1(2)=$11, $v_1(3)=$107, $v_1(4)=$2179, $v_1(5)=$82567, $v_1(6)=$5583143,\\ 
$v_1(7)=$655201697, $v_1(8)=$130520191667, $v_1(9)=$43329381463117, $v_1(10)=$23597530857908669, $v_1(11)=$20797360003215286823, $v_1(12)=$29308490295076752347591,\\ 
$v_1(13)=$65339739080988479904887093, $v_1(14)=$228234724532861569787299652557,\\
$v_1(15)=$1238274121008683979152081043208907\\
$v_1(16)=$10351929871038747992968349628529933369\\
$v_1(17)=$132377627814659825963442241922323085531281\\
$v_1(18)=$2571924431265830939301333200025838230553258111\\
$v_1(19)=$75444021281138938952784058763440328832674279137379\\
$v_1(20)=$3321792690361474996860493644715305084765269665618308593\\
$v_1(21)=$218334968978888575041251552042761003273126503339991819805711\\
$v_1(22)=$21313024242043949178681237334671968485268350209972689142921272311\\
$v_1(23)=$3074938372000025291498303992795936754870780958689717268965025942217421\\
$v_1(24)=$652699669482172490282676934161157626451563086120385162535269761584611164891\\
$v_1(25)=$202954341407401758711971116110740483720959788623998723081904830450235799301455\\001\\
$v_1(26)=$920682917154801274993099843574038678544045794741683560851671708092876464646214\\93486539\\
$v_1(27)=$606953417470746753972385190764991000233780386339555395497858341488534529044171\\09346308828203\\
$v_1(28)=$579323810655209909407881362707032378086487137190648522567086321337016713792928\\22256881318312054749\\
$v_1(29)=$797754380513626504221502114558810953352344481289765164052602274970557629254761\\23088536682518558068264719\\
$v_1(30)=$157953002384645846684507621385570194843236608277992153896832171414564999397298\\416337569247881342848591283299077\\
$v_1(31)=$448218023429213077510369041645769927017724940200139449677904128693536735557982\\666683613555813590118041533707177284109\\
$v_1(32)=$181720332609031147496763495578115121861473711243821635368727846712411566365782\\2407036267438048265899170654453206580563655933\\
$v_1(33)=$104948088412563777653573720399240836559349403122216127966432308846988750564794\\34676791687814866856932440812157229770756469630996711\\
$v_1(34)=$860907743051222492902299254180584650099908071859078465602195697635458894430580\\81725721622147801810064903826074508934515691184867637172769\\
$v_1(35)=$100034750851738735327051332537110595336215982542251116757087281684550171233925\\
9887158859747928688702884428371720102318582972487111543654892801537\\
$v_1(36)=$164211018565631948395572800562824941849501485439330692989810239903606093555440\\
85079748898408124808926312513804656051993675638200359333137786941844316201\\
$v_1(37)=$379836540175975030113062603753337900591727863301103453960792718994156443595822\\
557063622431071968739751229557701840080004011794990745699224003559273636329651347\\
$v_1(38)=$123497632614194906648728391168283912019616294483661141802611949912339707200507\\
7104374367601242059559681106190766914983600342527565249868001003298345757271014003623\\5767\\
$v_1(39)=$563052020283284540798895877117107285662666099221367919863537142485516428006736\\
7045053306398843609625570038138712518078688899569026372278932734879518502803134389320\\76204179859\\

\section{References}

(1)  Mills, W. H. (1947), A prime-representing function, Bulletin of the American Mathematical Society, 53 (6) p. 604\\
\\
(2)  Polymath (2014), The ``bounded gaps between primes'' Polymath project: a retrospective analysis, Newsletter of the European Mathematical Society, 94 p. 13-23\\
\\
(3)  Cramer, H. (1936), On the order of magnitude of the difference between consecutive prime numbers, Acta Arithmetica, 2 p. 23–46\\
\\
(4)  Shanks, D. (1964), On Maximal Gaps between Successive Primes, Mathematics of Computation, American Mathematical Society, 18 (88) p. 646–651\\
\\
(5)  Granville, A. (1995), Harald Cramer and the distribution of prime numbers, Scandinavian Actuarial Journal, 1 p. 12–28\\
\\
(6)  OEIS, www.oeis.org: A125313\\
\\

\end{document}